\newtheorem{X}{X}[section]
\newtheorem{corollary}[X]{Corollary}
\newtheorem{lemma}[X]{Lemma}
\newtheorem{proposition}[X]{Proposition}
\newtheorem{theorem}[X]{Theorem}
\theoremstyle{definition}
\newtheorem*{remark*}{Remark}
\renewcommand{\a}{\overrightarrow{a}}
\newcommand{\be}{\overrightarrow{\beta}}
\newcommand{\Irr}{\text{Irr}}
\newcommand{\Gal}{{\rm Gal}}
\newcommand{\Q}{\mathbb Q}
\renewcommand{\d}{{\rm d}}
\renewcommand{\be}{\begin{equation}}
\newcommand{\ee}{\end{equation}}
\newcommand\bea{\begin{eqnarray}}
\newcommand\eea{\end{eqnarray}}
\newcommand\bi{\begin{itemize}}
\newcommand\ei{\end{itemize}}
\newcommand\ben{\begin{enumerate}}
\newcommand\een{\end{enumerate}}
\newcommand\bc{\begin{center}}
\newcommand\ec{\end{center}}
\newcommand\ba{\begin{array}}
\newcommand\ea{\end{array}}
\newcommand{\Z}{\mathbb Z}
\title{Unconditional Chebyshev biases in number fields}
\author{Daniel Fiorilli and Florent Jouve}
\date{\today}
\address{Univ. Paris-Saclay, CNRS, Laboratoire de mathématiques d'Orsay, 91405, Orsay, France.} 
\email{daniel.fiorilli@universite-paris-saclay.fr}
\address{Univ. Bordeaux, CNRS, Bordeaux INP, IMB, UMR 5251,  F-33400, Talence, France.}
\email{florent.jouve@math.u-bordeaux.fr}
\begin{document}

\begin{abstract}

Prime counting functions are believed to exhibit, in various contexts, discrepancies beyond what famous equidistribution results predict; this phenomenon is known as Chebyshev's bias.
Rubinstein and Sarnak have developed a framework which allows to conditionally quantify biases in the distribution of primes in general arithmetic progressions.  Their analysis has been generalized by Ng to the context of the Chebotarev density theorem,
 under the assumption of the Artin holomorphy conjecture, the Generalized Riemann Hypothesis, as well as a linear independence hypothesis on the zeros of Artin $L$-functions. 
 In this paper we show unconditionally the occurence of  extreme biases in this context. These biases lie far beyond what the strongest effective forms of the Chebotarev density theorem can predict. More precisely, we prove the existence of an infinite family of Galois extensions and associated conjugacy classes $C_1,C_2\subset \Gal(L/K)$ of same size such that the number of prime ideals of norm up to $x$ with Frobenius conjugacy class $C_1$ always exceeds that of Frobenius conjugacy class $C_2$, for every large enough $x$. 
 A key argument in our proof relies on features of certain subgroups of symmetric groups which enable us to circumvent the need for unproven properties of  zeros of Artin $L$-functions.
\end{abstract}
\maketitle

\section{Introduction and statement of results}
\label{section intro}


In 1853, Chebyshev noticed in a letter to Fuss~\cite{Ch} that there seems to exist a bias in the distribution of primes modulo $4$, that is in most intervals of the form $[2,x]$, there appears to be more primes of the form $4n+3$ than of the form $4n+1$.
 It turns out that the specific statements made in Chebyshev's letter are quite deep: the second is equivalent to the Riemann hypothesis for $L(s,\chi_{-4})$, and the first can be made explicit under an additional linear independence hypothesis on the zeros of $L(s,\chi_{-4})$.
Chebyshev's observation has been widely generalized over the years; notably, Rubinstein and Sarnak~\cite{RS}  have shown that for two invertible residue classes $a$ and $b$ modulo $q$, there exists a bias towards $a$ (that is $\pi(x;q,a)>\pi(x;q,b)$ is true more often than $\pi(x;q,a)<\pi(x;q,b)$) if and only if $b$ is a quadratic residue and $a$ is a non-quadratic residue. These theoretical results, as well as the numerical determinations of the bias in the paper, are conditional on the generalized Riemann hypothesis and a linear independence hypothesis on the non-trivial zeros of Dirichlet $L$-functions. In the same paper~\cite[\S 5]{RS}, the authors mention several possible generalizations including biases in the distribution of prime ideals in Galois extensions of number fields. This context was explored by Ng in his Ph.D. thesis~\cite{Ng}. 
Consider a Galois extension $L/K$ of number fields, a congugacy class $C\subset G= \Gal(L/K)$, and define the Frobenius counting function 
$$ \pi(x;L/K,C):=\sum_{\substack{\mathfrak p\triangleleft \mathcal O_K \text{ unram.}\\ \mathcal N\mathfrak p \leq x \\ {\rm Frob}_\mathfrak p =C}} 1,  $$
where ${\rm Frob}_\mathfrak p$ denotes the Frobenius conjugacy class associated to the unramified prime ideal $\mathfrak p$, and $\mathcal N\mathfrak p= | \mathcal O_K/\mathfrak p|$ denotes its norm.
The Chebotarev density theorem asserts that $\pi(x;L/K,C)\sim \frac{|C|}{|G|} \int_2^x \frac{\d t}{\log t}$.
 More precisely, one is interested in understanding the size of the sets 
$$P_{L/K;C_1,C_2}:=\{ x \in \mathbb R_{\geq 1}\colon  |C_2|\pi(x;L/K,C_1) > |C_1|\pi(x;L/K,C_2)\}. $$
Ng~\cite{Ng} has shown under Artin's holomorphy conjecture, GRH, as well as a linear independence hypothesis on the set of zeros of Artin $L$-functions, that the set $P_{L/K;C_1,C_2}$ admits a logarithmic density, that is the limit
$$ \delta(P_{L/K;C_1,C_2}):= \lim_{X\rightarrow \infty} \frac 1{\log X} \int_{\substack{ 1\leq x \leq X\\ x\in P_{L/K;C_1,C_2}}} \frac{\d x }{x}$$
exists. Moreover, he computed this density in several explicit extensions, under the same hypotheses. 

The goal of this paper is to show \emph{unconditionally} the existence of the density $\delta(P_{L/K;C_1,C_2})$ in some families of
  extensions and for specific conjugacy classes.
  More precisely, we will exhibit a sufficient group-theoretic criterion on $G=\Gal(L/K)$ which implies in particular that $\delta(P_{L/K;C_1,C_2})=1$.
  This will involve the class function $r_G:G\rightarrow \mathbb C$ defined by 
$$ r_G(g) :=  \# \{ h\in G \colon h^2=g\}.$$
We will require $L/\Q$ to be Galois, and for a conjugacy class $C\subset G$ we will denote by $C^+$ the unique conjugacy class of $G^+:=\Gal(L/\Q)$ which contains $C$. Explicitly, 
\begin{equation}
C^+:=\bigcup_{a\in G^+} a C a^{-1}.
\label{equation definition C+}
\end{equation} 

\begin{theorem}
\label{theorem main}
Let $L/K$ be an extension of number fields for which $L/\Q$ is Galois. Assume that the conjugacy classes $C_1,C_2\subset G=\Gal(L/K)$ are such that $C_1^+=C_2^+$, but $ r_G(g_{C_1}) < r_G(g_{C_2}) $, where $g_{C_i}$ is a representative of $C_i$. Then, for all large enough $x$ we have the inequality $ |C_2|\pi(x,L/K,C_1) >|C_1| \pi(x,L/K,C_2) $. In particular, the set $P_{L/K;C_1,C_2}$ has natural (and logarithmic) density equal to $1$.
\end{theorem}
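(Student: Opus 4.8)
The plan is to avoid Artin $L$-functions, GRH and Artin holomorphy altogether, and to exploit the hypothesis $C_1^+=C_2^+$ so as to reduce the problem to the \emph{ordinary} (unconditional) Chebotarev density theorem over $\Q$. Write $G^+=\Gal(L/\Q)$, so that $G=\Gal(L/K)\le G^+$ with $[G^+:G]=[K:\Q]$; note that $C_1^+=C_2^+$ simply means that $g_{C_1}$ and $g_{C_2}$ are conjugate \emph{inside} $G^+$ (indeed $\bigcup_{aG}aCa^{-1}$ is precisely the full $G^+$-conjugacy class of any element of $C$). First I would split the count according to the residue degree $f(\mathfrak p/p)$ of $\mathfrak p$ over $\Q$, writing $\pi(x;L/K,C)=\pi^{(1)}(x;L/K,C)+\pi^{(2)}(x;L/K,C)+O(x^{1/3})$, where $\pi^{(j)}$ collects the primes with $f(\mathfrak p/p)=j$; the primes with $f\ge 3$ satisfy $\mathcal N\mathfrak p=p^f\le x$ with $p\le x^{1/3}$, hence number $O(x^{1/3})$.

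For the degree-one part I would use the classical description of the splitting of $p$ in the tower $\Q\subset K\subset L$ via the double cosets $G\backslash G^+/D_0$, where $D_0=\langle\sigma_0\rangle$ is the decomposition group of a prime $\mathfrak P_0\mid p$ of $L$ and $\sigma_0$ represents ${\rm Frob}_p$; write $[h]_G$ for the $G$-conjugacy class of $h\in G$. A degree-one prime $\mathfrak p\mid p$ of $K$ corresponds to a coset $gD_0$ with $g\sigma_0 g^{-1}\in G$, accounts for $|G|/|D_0|$ such cosets, and satisfies ${\rm Frob}_\mathfrak p=[g\sigma_0 g^{-1}]_G$; in particular the existence of such a prime with ${\rm Frob}_\mathfrak p\subseteq C^+$ forces ${\rm Frob}_p=C^+$. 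A short count then yields, for $p$ unramified with ${\rm Frob}_p=C^+$,
\[
\#\{\mathfrak p\mid p:\ f(\mathfrak p/p)=1,\ {\rm Frob}_\mathfrak p=C\}=\tfrac{1}{|G|}\#\{g\in G^+:\ g\sigma_0 g^{-1}\in C\}=\tfrac{|C|\,[K:\Q]}{|C^+|},
\]
and $0$ whenever ${\rm Frob}_p\neq C^+$. Summing over $p\le x$ gives $\pi^{(1)}(x;L/K,C)=\tfrac{|C|[K:\Q]}{|C^+|}\pi(x;L/\Q,C^+)+O(1)$. Since $C_1^+=C_2^+$, the combination $|C_2|\pi^{(1)}(x;L/K,C_1)-|C_1|\pi^{(1)}(x;L/K,C_2)$ is $O(1)$: the ``main term'' of the difference cancels \emph{exactly}.

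For the degree-two part, the same picture shows that a degree-two prime $\mathfrak p\mid q$ corresponds (with $\rho_0$ now representing ${\rm Frob}_q$) to a coset $gD_0$ with $g\rho_0 g^{-1}\notin G$ but $g\rho_0^2 g^{-1}\in G$, accounts for $2|G|/|D_0|$ cosets, and satisfies ${\rm Frob}_\mathfrak p=[(g\rho_0 g^{-1})^2]_G=[g\rho_0^2 g^{-1}]_G$. Thus the number of such primes above $q$ equals $\tfrac{1}{2|G|}\#\{g\in G^+:\ g\rho_0 g^{-1}\notin G,\ g\rho_0^2g^{-1}\in G,\ [g\rho_0^2g^{-1}]_G=C\}$, a quantity depending only on the $G^+$-class of $\rho_0$, i.e.\ on ${\rm Frob}_q$. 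Summing over $q\le\sqrt x$, applying the plain (unconditional) Chebotarev density theorem for $L/\Q$, and then collapsing the double sum over pairs $(\rho_0,g)\in (G^+)^2$ via the substitution $\tau=g\rho_0 g^{-1}$, one obtains
\begin{align*}
\pi^{(2)}(x;L/K,C)&=\frac{\#\{\tau\in G^+\setminus G:\ \tau^2\in C\}}{2|G|}\,\mathrm{Li}(\sqrt x)+o\!\left(\frac{\sqrt x}{\log x}\right)\\
&=\frac{|C|\bigl(r_{G^+}(g_C)-r_G(g_C)\bigr)}{2|G|}\,\mathrm{Li}(\sqrt x)+o\!\left(\frac{\sqrt x}{\log x}\right),
\end{align*}
where $r_{G^+}(g):=\#\{h\in G^+:h^2=g\}$, the last equality using that $r_G$, $r_{G^+}$ are class functions on $G$, $G^+$ together with $C\subseteq C^+$.

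It remains to combine the pieces. Since $g_{C_1}$ and $g_{C_2}$ are conjugate in $G^+$ we have $r_{G^+}(g_{C_1})=r_{G^+}(g_{C_2})$, so these terms cancel in the difference and
\[
|C_2|\pi(x;L/K,C_1)-|C_1|\pi(x;L/K,C_2)=\frac{|C_1||C_2|\bigl(r_G(g_{C_2})-r_G(g_{C_1})\bigr)}{2|G|}\,\mathrm{Li}(\sqrt x)+o\!\left(\frac{\sqrt x}{\log x}\right),
\]
the remaining $O(1)$ and $O(x^{1/3})$ errors being absorbed into the $o$-term. By hypothesis $r_G(g_{C_2})-r_G(g_{C_1})$ is a positive integer while $\mathrm{Li}(\sqrt x)\gg \sqrt x/\log x\to+\infty$, so the right-hand side is positive for all large $x$; the statements about natural and logarithmic densities follow immediately. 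I expect the only genuinely delicate point to be the bookkeeping in the two double-coset computations — tracking the orbit sizes correctly, handling the fact that $G$ need not be normal in $G^+$, and identifying the Frobenius classes of the degree-one and degree-two primes — since the analytic input (the prime number theorem and the bare Chebotarev density theorem) is entirely classical.
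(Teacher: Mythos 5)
Your argument is correct, and it reaches the theorem by a genuinely different route from the paper. The paper works with the class function $t=\frac{|G|}{|C_1|}1_{C_1}-\frac{|G|}{|C_2|}1_{C_2}$: Frobenius reciprocity gives $t^+\equiv 0$, the invariance of $\psi$ under induction gives $\psi(x;L/K,t)=\psi(x;L/\Q,t^+)=0$ identically, and the secondary term $-\langle t,r_G\rangle x^{1/2}$ is then extracted from the M\"obius relation between $\theta$ and $\psi$ (the $\ell=2$ term being evaluated by Chebotarev for $L/K$ via $\langle t(\cdot^2),1\rangle=\langle t,r_G\rangle$). You instead stratify the primes of $K$ by residue degree over $\Q$ and do the counting by hand through the double cosets $G\backslash G^+/D_0$: the degree-one part reduces to $\pi(x;L/\Q,C^+)$ and cancels exactly in the weighted difference because $C_1^+=C_2^+$, while the degree-two part produces the coefficient $|C|\bigl(r_{G^+}(g_C)-r_G(g_C)\bigr)/(2|G|)$, with $r_{G^+}$ cancelling in the difference since $g_{C_1}$ and $g_{C_2}$ are $G^+$-conjugate. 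I have checked your double-coset bookkeeping (orbit sizes $f|G|$, well-definedness on double cosets since $D_0$ is abelian, the identification ${\rm Frob}_{\mathfrak p}=[g\sigma_0^f g^{-1}]_G$, and the collapse via $\tau=g\rho_0 g^{-1}$), and your leading constant $\frac{|C_1||C_2|(r_G(g_{C_2})-r_G(g_{C_1}))}{2|G|}\mathrm{Li}(\sqrt x)$ agrees asymptotically with the paper's $\frac{|C_1||C_2|(r_G(g_{C_2})-r_G(g_{C_1}))}{|G|}\frac{x^{1/2}}{\log x}$. What each approach buys: yours is more elementary (no class functions, Artin induction or Frobenius reciprocity, only the unconditional Chebotarev theorem for $L/\Q$) and makes the source of the bias transparent, namely the degree-two primes of $K$ counted by elements $\tau\in G^+\setminus G$ with $\tau^2\in C_i$; the paper's formalism is more compact and extends directly to general class functions $t$, which is what its Lemma~\ref{lemma mobius and sbp} and the refined analysis in the concluding remarks rely on.
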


\begin{remark*}
The fact that the natural density of $P_{L/K;C_1,C_2}$ exists in Theorem~\ref{theorem main} is remarkable since it is widely believed that in the classical case of primes in arithmetic progressions as well as in the more general case of Galois extensions of number fields, the logarithmic density is the appropriate notion to work with. In general one cannot expect natural densities to exist 
(see~\cite{Ka}, as well as~\cite[p. 174]{RS} and the references therein).

Note also that in Theorem~\ref{theorem main}, one can further impose $C_1$ and $C_2$ to have the same size. Indeed, we will see in the proof of Proposition~\ref{proposition main} (see~\S\ref{sec:groups}) that there exists families of examples in which the group $G$ is abelian.
\end{remark*}

Next we state a group theoretic result showing that the hypotheses of Theorem~\ref{theorem main} are satified by infinitely many couples $(G,G^+)$ and associated conjugacy classes $C_1,C_2 \subset G$.
\begin{proposition}
\label{proposition main}
For $n\geq 8$ the symmetric group $G^+=S_n$ admits a subgroup $G$ which contains conjugacy classes $C_1$, $C_2$ satisfying $C_1^+=C_2^+$, but $ r_G(g_{C_1}) < r_G(g_{C_2}) $, where $g_{C_i}\in C_i$ ($i=1,2$).
\end{proposition}

The combination of Theorem~\ref{theorem main}, Proposition~\ref{proposition main} and the fact going back to Hilbert that the inverse Galois problem over $\Q$ is solved for the symmetric group $S_n$ immediately yields the following consequence. 

\begin{corollary}\label{cor}
There exists infinitely many Galois extensions $L/K$ and conjugacy classes $C_1,C_2 \subset \Gal(L/K)$ for which $\delta(P_{L/K;C_1,C_2})=1$.
\end{corollary}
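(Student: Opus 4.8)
The plan is to pass from counting prime ideals of $K$ to counting rational primes, organized by the residue degree $f(\mathfrak p):=[\mathcal O_K/\mathfrak p:\mathbb F_p]$ of $\mathfrak p\triangleleft\mathcal O_K$ over the rational prime $p$ below it. The driving observation is that the residue‑degree‑one primes are distributed \emph{rigidly}: their count is a fixed rational multiple of $\pi(x;L/\Q,C^+)$ that depends on $C$ only through $|C|$. Hence, in the difference $\Delta(x):=|C_2|\,\pi(x;L/K,C_1)-|C_1|\,\pi(x;L/K,C_2)$, the (individually uncontrollable) fluctuations of $\pi(x;L/\Q,C^+)$ cancel outright, and the entire bias is produced by the much sparser residue‑degree‑two primes — a regime where every quantity is $\asymp\sqrt x/\log x$ and the \emph{unconditional} Chebotarev density theorem is more than enough.

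I would start from the standard double‑coset picture. Fix $p$ unramified in $L$, a prime $\mathfrak P$ of $L$ above $p$, and $\sigma_0={\rm Frob}_{\mathfrak P/p}\in G^+=\Gal(L/\Q)$. Primes of $K$ above $p$ correspond to $Ga\langle\sigma_0\rangle\in G\backslash G^+/\langle\sigma_0\rangle$, with $\mathcal N\mathfrak p=p^{f_a}$, $f_a=[\langle\sigma_0\rangle:\langle\sigma_0\rangle\cap a^{-1}Ga]$, and ${\rm Frob}_\mathfrak p$ the $G$‑conjugacy class of $a\sigma_0^{f_a}a^{-1}$. Since $f_a\ge 3$ forces $\mathcal N\mathfrak p\ge p^3$, those primes contribute $O(x^{1/3})$ to $\pi(x;L/K,C)$ and may be dropped. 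For $f_a=1$ one has ${\rm Frob}_\mathfrak p=[a\sigma_0 a^{-1}]_G$, and a short count — the set $\{a\in G^+:a\sigma_0 a^{-1}\in C\}$ has size $|C|\,|C_{G^+}(\sigma_0)|$ and is empty unless ${\rm Frob}_p=C^+$, writing $C^+:=C_1^+=C_2^+$ — shows that the $f_a=1$ part of $\pi(x;L/K,C)$ equals $\tfrac{|C|\,|G^+|}{|G|\,|C^+|}\,\pi(x;L/\Q,C^+)+O(1)$. As this depends on $C$ only through $|C|$, the $f_a=1$ contributions to $|C_2|\pi(x;L/K,C_1)$ and to $|C_1|\pi(x;L/K,C_2)$ are equal, so they disappear from $\Delta(x)$. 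This cancellation is the heart of the matter: it is exactly what makes the statement unconditional.

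For $f_a=2$ I would run the same bookkeeping. Set $S_2(C):=\{g\in G:g^2\in C\}$ and let $\mathcal T$ be the set of conjugacy classes $T\subset G^+$ with $[\sigma^2]_{G^+}=C^+$ for $\sigma\in T$. The residue‑degree‑two primes above $p$ with ${\rm Frob}_\mathfrak p=C$ are the size‑two orbits of $\langle\sigma_0\rangle$ acting by right multiplication on $\{Ga:a\sigma_0^2a^{-1}\in C\}$, whose size‑one orbits form $\{Ga:a\sigma_0 a^{-1}\in S_2(C)\}$; both sets are empty unless ${\rm Frob}_p\in\mathcal T$. Evaluating cardinalities as in the previous step, the number of such primes above a given $p$ with ${\rm Frob}_p=T\in\mathcal T$ is the $p$‑independent integer $N(T,C)=\tfrac{|G^+|}{2|G|}\bigl(\tfrac{|C|}{|C^+|}-\tfrac{|S_2(C)\cap T|}{|T|}\bigr)$ (and it is $0$ for $T\notin\mathcal T$). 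Summing over $p\le\sqrt x$ and combining, the $\tfrac{|C|}{|C^+|}$‑terms cancel and one is left with
\[
\Delta(x)=\sum_{T\in\mathcal T}\frac{|G^+|}{2|G|\,|T|}\bigl(|C_1|\,|S_2(C_2)\cap T|-|C_2|\,|S_2(C_1)\cap T|\bigr)\,\pi(\sqrt x;L/\Q,T)+O(x^{1/3}).
\]

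To finish, insert the unconditional Chebotarev density theorem in the form $\pi(\sqrt x;L/\Q,T)=\tfrac{|T|}{|G^+|}\,{\rm Li}(\sqrt x)+o({\rm Li}(\sqrt x))$; since $\mathcal T$ is finite this gives
\[
\Delta(x)=\frac{{\rm Li}(\sqrt x)}{2|G|}\bigl(|C_1|\,|S_2(C_2)|-|C_2|\,|S_2(C_1)|\bigr)+o\bigl({\rm Li}(\sqrt x)\bigr),
\]
using $\sum_{T\in\mathcal T}|S_2(C)\cap T|=|S_2(C)|$ (every $g$ with $g^2\in C$ has $[g]_{G^+}\in\mathcal T$). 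Finally $|S_2(C)|=\sum_{g\in C}r_G(g)=|C|\,r_G(g_C)$ because $r_G$ is a class function, so the leading coefficient is $\tfrac{|C_1||C_2|}{2|G|}\bigl(r_G(g_{C_2})-r_G(g_{C_1})\bigr)>0$ by hypothesis. Therefore $\Delta(x)\to+\infty$; in particular $\Delta(x)>0$ for all large $x$, so $P_{L/K;C_1,C_2}$ contains a half‑line and its natural and logarithmic densities are $1$. I expect the real work to be the two double‑coset computations, above all the $f_a=1$ rigidity, which is the only genuinely nontrivial input — everything downstream is soft, and the analytic ingredient is merely qualitative Chebotarev over $\Q$. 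The Corollary then follows by feeding the explicit permutation data of the paragraph preceding it into Theorem~\ref{theorem main}, together with the classical fact that $S_n$ is realized as $\Gal(L/\Q)$ for infinitely many $L$, giving infinitely many distinct fixed fields $K=L^{G}$, hence infinitely many pairs.
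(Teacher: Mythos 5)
Your argument is correct, and your closing step for the Corollary itself is the same as the paper's: feed the explicit data $G=\langle (12)(34),(5678)\rangle<S_n$, $C_1=\{(12)(34)\}$, $C_2=\{(57)(68)\}$ (with $C_1^+=C_2^+=C_{(2,2)}$, $r_G(g_{C_1})=0$, $r_G(g_{C_2})=4$) into Theorem~\ref{theorem main} and invoke the realization of $S_n$ as a Galois group over $\Q$ for infinitely many $L$. What is genuinely different is that the bulk of your proposal is an independent, elementary proof of Theorem~\ref{theorem main}. The paper takes the class function $t=\frac{|G|}{|C_1|}1_{C_1}-\frac{|G|}{|C_2|}1_{C_2}$, observes via Frobenius reciprocity that $t^+\equiv 0$, hence $\psi(x;L/K,t)=\psi(x;L/\Q,t^+)\equiv 0$ by Artin induction, and then extracts the secondary term $-\langle t,r_G\rangle x^{1/2}$ from the $\ell=2$ term of the M\"obius relation between $\theta$ and $\psi$, finishing by partial summation. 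You instead decompose $\pi(x;L/K,C)$ directly by residue degree using the double-coset description of splitting: the degree-one count $\frac{|C|\,|G^+|}{|G|\,|C^+|}\pi(x;L/\Q,C^+)+O(1)$ depends on $C$ only through $|C|$ once $C^+$ is fixed and so cancels exactly in $\Delta(x)$; the degree-two orbit count, combined with $|S_2(C)|=|C|\,r_G(g_C)$ and qualitative Chebotarev over $\Q$, produces the leading term $\frac{|C_1||C_2|}{2|G|}\bigl(r_G(g_{C_2})-r_G(g_{C_1})\bigr)\mathrm{Li}(\sqrt x)$, which agrees with the paper's $-\langle t,r_G\rangle\,x^{1/2}/\log x$ since $\mathrm{Li}(\sqrt x)\sim 2\sqrt x/\log x$; degrees at least $3$ contribute $O(x^{1/3})$. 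Both proofs are unconditional and use the same analytic input (unconditional Chebotarev for $L/\Q$); yours replaces the character-theoretic formalism (induction of class functions, the $\psi/\theta/\pi$ conversions) by explicit double-coset bookkeeping, which makes the mechanism of the cancellation and the origin of the $\sqrt x$ bias (square roots of Frobenius in $G$ versus in $G^+$) completely concrete, at the price of redoing by hand what the identities $t^+\equiv 0$ and $\frac 1{|G|}\sum_{g\in G}t(g^2)=\langle t,r_G\rangle$ deliver in two lines. I see no gap; the only mild caution is that your degree-two count tacitly uses that $\{Ga: a\sigma_0^2a^{-1}\in C\}$ and $S_2(C)$ are unions of $G$-cosets and $G$-invariant respectively, which holds because $C$ is a conjugacy class, and is worth stating.
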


The paper is organized as follows. Section~\ref{sec:groups} is devoted to the group theoretic aspects of our main result. In particular we prove Proposition~\ref{proposition main} and discuss generalizations and related questions. In Section~\ref{sec:mainproof}, we prove Theorem~\ref{theorem main}. We conclude the paper with Section~\ref{sec:numerics} which is devoted to numerical computations and illustrations of Theorem~\ref{theorem main}.

\section*{Acknowledgments}

 Experiments presented in this paper were carried out using the PlaFRIM experimental testbed, supported by Inria, CNRS (LABRI and IMB), Université de Bordeaux, Bordeaux INP and Conseil Régional d'Aquitaine (see \url{https://www.plafrim.fr/}). We thank Bill Allombert for his insights and for providing us with the {\tt pari/gp} code and the data needed for this project. We also thank Mounir Hayani for very inspiring remarks. Finally we thank the referee and editors for a thorough reading and for suggestions which led to significant  improvements in the presentation of the paper. The work of both authors was partly funded by the ANR through project FLAIR (ANR-17-CE40-0012). 

\section{Group theoretical results}\label{sec:groups}

The goal of this section is to construct families of abelian extensions $L/K$ satisfying the hypotheses of Theorem~\ref{theorem main}.

\begin{proof}[Proof of Proposition~\ref{proposition main}]
For $n\geq 8$, consider the permutations $g_1:=(12)(34)$ and $g_2:=(57)(68)$ as elements of $S_n$. Let $G:= \langle (12)(34),(5678) \rangle  < S_n$. We claim that the choices $C_1={g_1}$ and $C_2={g_2}$ satisfy the required properties. Indeed, 
$ C_1^+ = C_2^+=C_{(2,2)} $, where $C_{(2,2)}$ is the set of elements of $S_n$ of cycle type $(2,2)$. Moreover, an enumeration of the elements of $G$ shows that $r_G(g_1)=0$ and $r_G(g_2)=4$.
\end{proof}

The next lemma gives a 
 group theoretical criterion which generalizes the construction in the proof of Proposition~\ref{proposition main} and which implies the conditions of Theorem~\ref{theorem main}. (Here and later in the paper we make a slight abuse of notation by denoting $r_G(C)$ the common value $r_G(g)$ as $g$ runs over the $G$-conjugacy class $C$.)

\begin{lemma}
Let $G^+$ be a group and let $H$ and $K$ be subgroups having trivial intersection and such that $H$ centralizes $K$. Let $h\in H$ be a non-square (in $H$), and let $k\in K$ be a square (in $K$) which is a conjugate of $h$ in $G^+$. Then, the conjugacy classes $C_1=C_h$ and $C_2=C_k$ in the group $G=HK$ are such that $r_G(C_2)> r_G(C_1) $; in other words, the conditions of Theorem~\ref{theorem main} hold.

\label{lemma criterion} 
\end{lemma}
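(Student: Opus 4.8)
The plan is to verify the two hypotheses of Theorem~\ref{theorem main} for the conjugacy classes $C_1 = C_h$ and $C_2 = C_k$ inside $G = HK$, namely (i) $C_1^+ = C_2^+$ in $G^+$, and (ii) $r_G(h) < r_G(k)$. Hypothesis (i) is immediate: by assumption $k$ is a $G^+$-conjugate of $h$, so $h$ and $k$ lie in the same conjugacy class of $G^+$, whence $C_1^+ = C_2^+$. (One should note that $G = HK$ is indeed a subgroup of $G^+$: since $H$ centralizes $K$, the product $HK$ equals $KH$ and is a subgroup, in fact isomorphic to $H \times K$ because $H \cap K = \{1\}$.)

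The substance is in (ii), which I would prove by exhibiting an injection from the square-root set of $h$ in $G$ into that of $k$, and then showing it fails to be surjective. Identify $G \cong H \times K$ via $(a,b) \mapsto ab$; then an element $ab$ with $a \in H$, $b \in K$ satisfies $(ab)^2 = a^2 b^2$, and since $a^2 \in H$, $b^2 \in K$ and $H \cap K = \{1\}$, the equation $(ab)^2 = g$ with $g = g_H g_K$ (its $H$- and $K$-components) forces $a^2 = g_H$ and $b^2 = g_K$ separately. Hence $r_G(g) = r_H(g_H)\, r_K(g_K)$, a product of the square-counting functions in the two factors. Now write $h = h_H h_K$ and $k = k_H k_K$ in components. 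Here I would use that $h \in H$ means $h_K = 1$, and $k \in K$ means $k_H = 1$; so $r_G(h) = r_H(h)\, r_K(1)$ and $r_G(k) = r_H(1)\, r_K(k)$. Since $h$ is a non-square in $H$ we have $r_H(h) = 0$, giving $r_G(h) = 0$. On the other hand $k$ is a square in $K$, so $r_K(k) \geq 1$; moreover $r_H(1) = \#\{a \in H : a^2 = 1\} \geq 1$ since the identity always contributes. Therefore $r_G(k) = r_H(1)\, r_K(k) \geq 1 > 0 = r_G(h)$, which is exactly the strict inequality $r_G(C_1) < r_G(C_2)$ required.

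The only genuinely delicate point is the factorization $r_G(g) = r_H(g_H)\,r_K(g_K)$, and in particular the claim that a square root $x \in G$ of $g$ must itself respect the direct-product decomposition. This is where the hypotheses $H \cap K = \{1\}$ and ``$H$ centralizes $K$'' are both used: centralization guarantees $HK \cong H \times K$ so that componentwise squaring makes sense, and trivial intersection guarantees that $a^2 b^2 = g_H g_K$ with all four elements in the respective subgroups forces $a^2 = g_H$, $b^2 = g_K$. I expect this to be the main thing to get right; once it is in place, the conclusion $r_G(h) = 0 < r_G(k)$ is a short consequence, and the hypotheses of Theorem~\ref{theorem main} are met, so its conclusion applies verbatim to $C_1 = C_h$ and $C_2 = C_k$.
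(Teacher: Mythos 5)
Your argument is correct and follows essentially the same route as the paper: both decompose a square root $x=st$ with $s\in H$, $t\in K$, use commutativity to get $x^2=s^2t^2$ and trivial intersection to separate the components, yielding $r_G(h)=r_H(h)\,r_K(1)=0$ and $r_G(k)=r_H(1)\,r_K(k)\geq 1$. Your packaging of this as the multiplicativity $r_G(g)=r_H(g_H)\,r_K(g_K)$ for $G\cong H\times K$ is just a slightly more systematic phrasing of the paper's computation, and your explicit check of $C_1^+=C_2^+$ is the same immediate observation the paper leaves implicit.
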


\begin{proof}
The fact that $H$ centralizes $K$ guarantees that $G=HK=KH$ is a subgroup of $G^+$. Moreover, any $x\in G$ such that $x^2=k$ can be written $x=st$ with $s\in H$ and $t\in K$ (and in this decomposition there is a unique $(s,t)$ corresponding to each $x$ since 
$H\cap K=\{1\}$). Thus $k=s^2t^2$, which implies that $s^2\in H\cap K$. Therefore $s^2=1$, and as a result
$$
\#\{x\in G\colon  x^2=k\}=\#\{x\in K\colon  x^2=k\}\cdot 
\#\{x\in H\colon  x^2=1\}>0\,.$$
By symmetry, we also have that 
$$\#\{x\in G\colon  x^2=h\}=\#\{x\in H\colon  x^2=h\}\cdot 
\#\{x\in K\colon  x^2=1\}=0\,.
$$ 
\end{proof}

In order to apply Lemma~\ref{lemma criterion}, 
take for instance $G^+=S_n$, and let $\sigma,\tau \in S_n$ be permutations of order divisible by $4$ which have the same cycle type, but have disjoint supports. Consider the subgroups $H=\langle \sigma^2 \rangle$ and $K=\langle \tau \rangle$, and the elements $h=\sigma^2$ and $k=\tau^2$. We clearly have that $ r_K(k)\geq 1$ and $r_{H}(h)=0 $, and Lemma~\ref{lemma criterion} applies. 


\begin{remark*}
From a group theoretical point of view, it would be  interesting to classify the tuples $(G,G^{+},C_1,C_2)$ such that $G^+$ is a finite group, $G <G^+$ and $C_1, C_2$ are conjugacy classes of $G$ such that $r_G(C_1)\neq r_G(C_2)$ and $C_1^+=C_2^+$ in $G^+$ (recall~\eqref{equation definition C+}). For example, one notices that no such tuple exists where $G$ is a normal subgroup of $G^+$ (see~\cite[Proof of Lemma 3.13]{FJ}). Beyond this case, one may ask the following questions: how rare is the property enjoyed by these tuples?
What are the ``minimal'' examples?
Such questions are the subject of Mounir Hayani's forthcoming Ph.D. thesis.
%
\end{remark*}

\section{Proof of Theorem~\ref{theorem main}}\label{sec:mainproof}


To introduce the natural framework of Theorem~\ref{theorem main}, we will work in the setting of~\cite{Be}, that is we will consider general class functions $t\colon\Gal(L/K)\rightarrow \mathbb C$, and define\footnote{See for instance~\cite[Chap. 1 \S4]{Mar} for a definition of ${\rm Frob}_\mathfrak p$ in the case where $\mathfrak p$ is ramified.}
$$ \psi(x;L/K,t):= \sum_{\substack{\mathfrak p\triangleleft \mathcal O_K\\ \mathcal N\mathfrak p \leq x \\ k\geq 1 }} t({\rm Frob}_\mathfrak p^k) \log (\mathcal N\mathfrak p);\qquad \theta(x;L/K,t):= \sum_{\substack{\mathfrak p\triangleleft \mathcal O_K\\ \mathcal N\mathfrak p \leq x }} t({\rm Frob}_\mathfrak p)\log (\mathcal N\mathfrak p); $$
$$ \pi(x;L/K,t):= \sum_{\substack{\mathfrak p\triangleleft \mathcal O_K\\ \mathcal N\mathfrak p \leq x  \\ \mathfrak p \text{ unram.}}} t({\rm Frob}_\mathfrak p). $$
When $L/\Q$ is Galois, we will use the shorthands $G:=\Gal(L/K)$, $G^+:=\Gal(L/\Q)$, as well as
$$ t^+ = {\rm Ind}_G^{G^+} t \colon G^+\to \mathbb C\,,\qquad g\mapsto   \sum_{\substack{aG\in G^+/G : \\a^{-1} g a \in G }} t(a^{-1} g a). 
 $$
 Finally, we recall that the inner product of class functions $t_1,t_2\colon G \rightarrow \mathbb C$ is defined by 
 $$ \langle t_1 , t_2 \rangle_G := \frac 1{|G|} \sum_{g\in G} t_1(g)\overline{t_2(g)}. $$
(We will simply write $\langle t_1,t_2\rangle$, dropping the subscript $G$, where the underlying group is clear from context.)

\begin{lemma}
\label{lemma mobius and sbp}
Let $L/K$ be an extension of number fields for which $L/\Q$ is Galois, and let $t\colon\Gal(L/K)\rightarrow \mathbb C$ be a class function. We have the estimate
$$ \pi(x;L/K,t) = \int_{2^-}^x  \frac{\d \psi(u;L/\Q,t^+) }{\log u}  -\langle t,r_G \rangle \frac{x^{\frac 12}}{\log x}+ o\Big(\frac{x^{\frac 12}}{\log x}\Big). $$
\end{lemma}

\begin{proof}
For any integer $\ell\geq 2$, denote by $f_\ell\colon G\to G$ the class function defined by $f_\ell(g)=g^\ell$. Let $\mu$ denote the M\"obius function; inclusion-exclusion implies that
\begin{align*}
 \theta(x;L/K,t) &=\psi(x;L/K,t) + \sum_{\ell \geq 2} \mu(\ell) \psi(x^{\frac 1\ell};L/K,t\circ f_\ell) \\
 &= \psi(x;L/K,t)- \langle t,r_G \rangle x^{\frac 12}(1+o(1))+O(x^{\frac 13}),  
\end{align*}
by the Chebotarev density theorem and the identity $ \frac 1{|G|}\sum_{g\in G} t(g^2) = \langle t,r_G\rangle . $  
%
The claimed estimate follows from a summation by parts and an application of the identity $$\psi(u;L/K,t)=\psi(u;L/\Q,t^+),$$  which is a consequence of the invariance of Artin $L$-functions under induction~(\cite[\S2]{Artin}, under the form used in~\cite[Proposition 3.11]{FJ}).
\end{proof}

\begin{proof}[Proof of Theorem~\ref{theorem main}]
We first compute, for any conjugacy class $C$ of $G$, any fixed $g_C\in C$ and any irreducible character $\chi$ of $G^+$,
$$
\langle 1_{C}^+,\chi\rangle_{G^+}=
\langle 1_C,\chi_{|G}\rangle_G=\frac{|C|}{|G|}\overline{\chi(g_C)}=\frac{|C| |G^+|}{|G| |C^+|}\langle 1_{C^+},\chi\rangle_{G^+}\,,
$$
where the first step uses Frobenius reciprocity. Therefore, denoting $t_{C_1,C_2}\colon\Gal(L/K)\rightarrow \mathbb C $ the class function $t_{C_1,C_2}=\frac{|G|}{|C_1|}1_{C_1}-\frac{|G|}{|C_2|}1_{C_2}$, one has $t_{C_1,C_2}^+ =\frac{|G^+|}{|C_1^+|}1_{C_1^+}-\frac{|G^+|}{|C_2^+|}1_{C_2^+}  \equiv 0$. Hence, Lemma~\ref{lemma mobius and sbp} implies that
$$ \pi(x;L/K,t_{C_1,C_2}) =-\langle t_{C_1,C_2},r_G \rangle \frac{x^{\frac 12}}{\log x} + o\Big(\frac{x^{\frac 12}}{\log x}\Big). $$
However, $-\langle t_{C_1,C_2},r_G \rangle = r_G(g_{C_2})-r_G(g_{C_1})>0 $, and thus $ \pi(x;L/K,t_{C_1,C_2}) >0 $ for all large enough values of $x$.
\end{proof}


We now discuss more precisely the oscillations of $\pi(x;L/K,C_1) -\pi(x;L/K,C_2)$ for triples $(L/K,C_1,C_2)$ chosen as in the proof of Proposition~\ref{proposition main} and Corollary~\ref{cor} (an explicit example of such a Galois extension produces Figure~\ref{figure}, and the purpose here is to discuss the rate of convergence of the function plotted to its asymptotic value). 
We recall that in the proof of Proposition~\ref{proposition main}, we have chosen $G=\langle  (12)(34),(5678) \rangle $ and $t=1_{C_1}-1_{C_2} $, where  $C_1=\{(12)(34)\}$ and $C_2=\{(57)(68)\}$. Since $G$ is abelian of order $8$, the class function  $ r_m(g):=\#\{ h\in G \colon h^m = g \}$ is identically equal to $1$ for all odd $m\geq 1$, 
 and in particular, $\langle t\circ f_3,1\rangle = 0$ (where we recall that $f_\ell$ is the function on $G$ raising elements to their $\ell$-th power). The identity $\psi(x;L/K,t)=\psi(x;L/\Q,t^+)\equiv 0$ and the Riemann Hypothesis for Artin $L$-functions then imply that
\begin{align*}
 \theta(x;L/K,t) &=\psi(x;L/K,t) - \psi(x^{\frac 12};L/K,t\circ f_2)- \psi(x^{\frac 13};L/K,t\circ f_3) +O(x^{\frac 15})  \\ 
 &= -\langle t, r_G\rangle x^{\frac 12} + \sum_{\chi\in \Irr(G)} \overline{\langle \chi,t\circ f_2 \rangle} \sum_{\rho_\chi} \frac{x^{\frac 14+\frac{1}2 \Im(\rho_\chi) i}}{\rho_\chi}+O(x^{\frac 15}), 
 \end{align*}
by the explicit formula (see for instance~\cite[Theorem 3.4.9]{Ng}). Here, $\Irr(G)$ denotes the set of irreducible characters of $G$, and $\rho_\chi$ runs through the non-trivial zeros of the Artin $L$-function $L(s,L/K,\chi)$. Now, in this particular example $8 t\circ f_2 = 1_{\{(5678) \}}+1_{\{(5678)(12)(34) \}}+1_{\{(5876) \}}+1_{\{(5876)(12)(34) \}}$, and thus $\langle \chi, t\circ f_2 \rangle = \chi((5678))+\chi((5678)(12)(34))+ \chi((5876))+ \chi((5876)(12)(34)) $ (which is not identically zero). This explains why we expect the difference between the solid line and the data in Figure~\ref{figure} to be roughly of order $x^{-\frac 14}.$ (More precisely, we expect order $x^{-\frac 14}$ almost everywhere, and maximal order $x^{-\frac 14} (\log\log\log x)^2.$)

\section{Numerical examples}\label{sec:numerics}

\begin{figure}[hbt!]

\caption{The normalized difference $(\pi(x;L/K,C_1) -\pi(x;L/K,C_2))/R(x) $ with $1\leq x \leq 10^{10}$ (data due to B. Allombert)}
\centerline{\includegraphics[scale=.40]{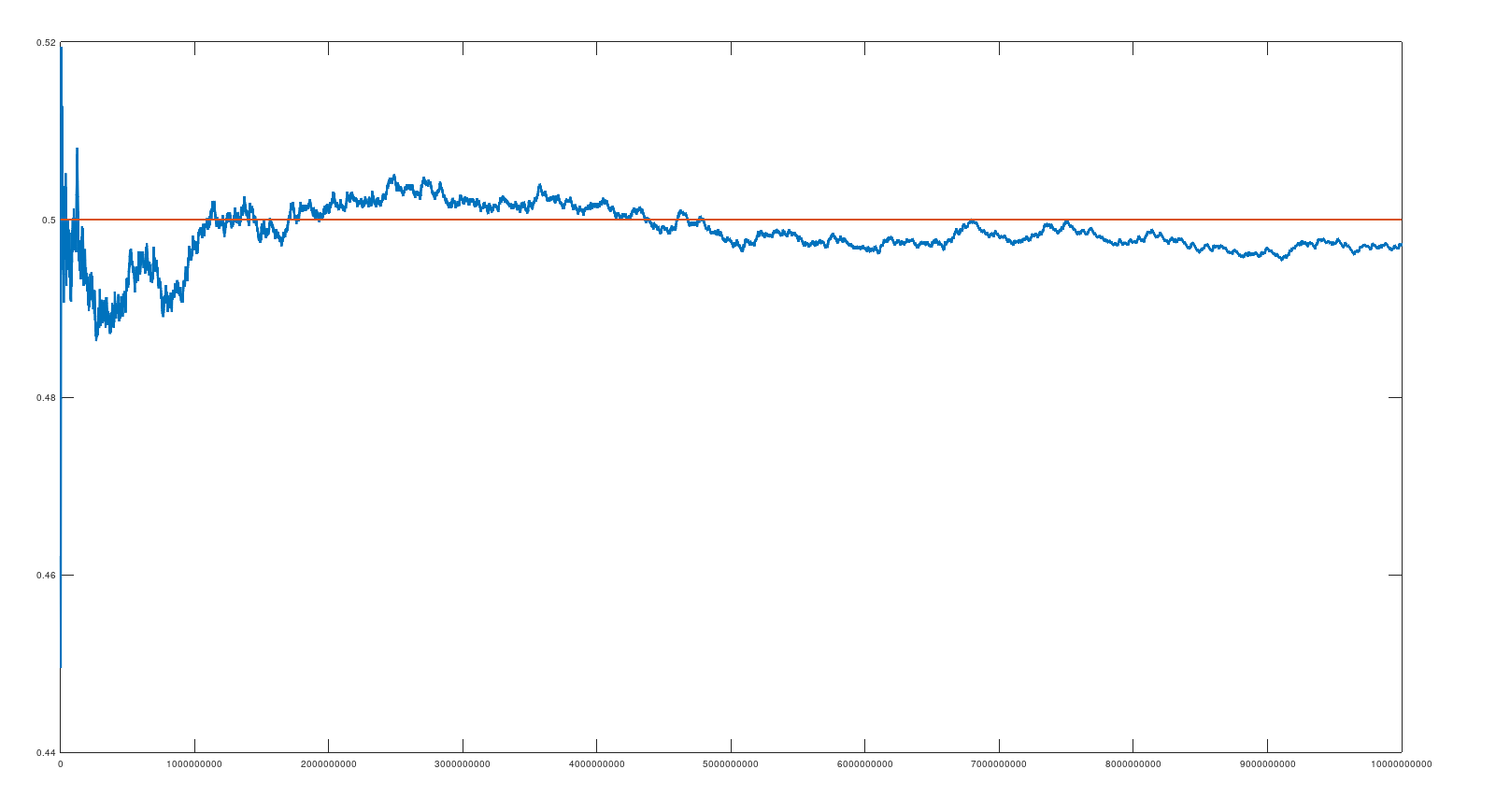}}
\label{figure}
\end{figure}

In this section we discuss our numerical verification of Theorem~\ref{theorem main} and Proposition~\ref{proposition main}. It would be computationally very expensive to work with the full group $S_8$. However, it turns out that one can replace $S_8$ with a relatively small subgroup which has the required properties.
Consider $G^+:=\langle (12)(34),(5678),(15)(27)(36)(48) \rangle $; let us show that $G^+$ is isomorphic to the wreath product of $\Z/4\Z$ and $\Z/2\Z$, which is of order $32$. 
Denote the permutations appearing in the generating set of $G^+$ by $\tau$, $\sigma$, and $\gamma$, respectively, and note that  
$G^+=\langle \sigma,\gamma\sigma\gamma,\gamma\rangle$  
(since $\gamma\sigma\gamma=(1324)$, and thus $(\gamma\sigma\gamma)^2=\tau$). 
The subgroup $\langle\sigma,\gamma\sigma\gamma\rangle$ is clearly isomorphic to $(\Z/4\Z)\times (\Z/4\Z)$. 
Moreover, conjugating  by $\gamma$ on $\langle\sigma,\gamma\sigma\gamma\rangle$ amounts to exchanging the two factors $\Z/4\Z$, which is the definition of the wreath product. 

Consider also the abelian subgroup $G:= \langle (12)(34),(5678) \rangle <G^+ $ as well as the conjugacy classes $C_1:=\{(12)(34)\}$ and $C_2:=\{(57)(68)\}$.
In the group $G^+$, one has that $\gamma^{-1} C_1 \gamma =C_2$, that is $C_1^+=C_2^+$. It follows from Theorem~\ref{theorem main} that for any Galois number field $L/\Q$ such that $\Gal(L/\Q) \simeq G^+$, the sub-extension $K=L^{G}$ has the property that for all large enough $x$,
$$ \pi(x;L/K,C_1) >\pi(x;L/K,C_2)$$(recall that $|C_1|=|C_2|=1$). Bill Allombert has kindly provided us with the {\tt pari/gp} code allowing for a  numerical check of this inequality up to $x=10^{10}$, for a particular number field $L/\Q$ of Galois group $G^+$. Explicitly, $L=\Q[x]/(f(x))$, where\footnote{ For the full code, \href{https://www.math.u-bordeaux.fr/~fjouve001/UnconditionalBiasCode.gp}{click here} or visit https://www.math.u-bordeaux.fr/$\sim$fjouve001/UnconditionalBiasCode.gp.} 
\begin{align*}
f(x)=&x^{32} - 128 x^{30} + 5680 x^{28} - 120576 x^{26} + 1386352 x^{24} - 9267712 x^{22} + 38233408  x^{20}\\
& - 101305344 x^{18} + 176213088 x^{16} - 202610688 x^{14} + 152933632 x^{12} - 74141696 x^{10} \\
&+ 22181632 x^8 - 3858432 x^6 + 363520 x^4 - 16384 x^2 + 256\,.
\end{align*}
In Figure~\ref{figure} we have plotted the difference $\pi(x;L/K,C_1) -\pi(x;L/K,C_2)$, normalized by the function 
$$R(x):= \frac {x^{\frac 12}}{\log x}+ \int_2^{x} \frac{\d u}{u^{\frac 12} (\log u)^2}\sim \frac {x^{\frac 12}}{\log x},$$
which can be shown following the proof of Lemma~\ref{lemma mobius and sbp} to be the ``natural approximation'' for the order of magnitude of this difference. As expected, we see that the plotted function converges to $\frac 12$, and to illustrate this we have added the solid line $y=\frac 12$ on the plot. Finally, we see that as predicted in Section~\ref{sec:mainproof}, the difference between the graph and the solid line is of order $x^{\frac 14}$.


\begin{thebibliography}{99}

\bibitem[A]{Artin}
   E. Artin,
   \textit{Zur theorie der $L$-reihen mit allgemeinen gruppencharakteren},
   {Abh. Math. Sem. Univ. Hamburg},
   {8},
   {1931},
   {1},
   {292--306},


\bibitem[B]{Be} J. Bella\"iche, \textit{Théorème de Chebotarev et complexité de Littlewood.}
Ann. Sci. Éc. Norm. Supér. (4) \textbf{49} (2016), no. 3, 579--632.

\bibitem[C]{Ch} P. L. Chebyshev, \textit{Lettre de M. le Professeur Tch\'ebychev \`a M. Fuss sur un nouveau th\'eor\`eme relatif aux nombres premiers contenus dans les formes $4n + 1$ et $4n + 3$}, Bull. Classe Phys. Acad. Imp. Sci. St. Petersburg, 11 (1853), 208.

\bibitem[FJ]{FJ} F. Jouve and D. Fiorilli, \emph{Distribution of Frobenius elements in Galois extensions}. 	arXiv:2001.05428

\bibitem[K]{Ka} J. Kaczorowski, \emph{On the distribution of primes (mod $4$).} Analysis \textbf{15} (1995), no. 2, 159--171.


\bibitem[M]{Mar} J. Martinet,
Character theory and Artin $L$-functions. Algebraic number fields: $L$-functions and Galois properties (Proc. Sympos., Univ. Durham, Durham, 1975), pp. 1--87. Academic Press, London, 1977.


\bibitem[N]{Ng} N. Ng, \emph{Limiting distributions and zeros of Artin L-functions.} Ph.D. thesis, University of British Columbia, 2000. 

\bibitem[RS]{RS} M. Rubinstein and P. Sarnak, \emph{Chebyshev's bias.} Experiment. Math. \textbf{3} (1994), no. 3, 173--197. 


\end{thebibliography}
\end{document}